\theoremstyle{plain}
\newtheorem{proposition}{Proposition}[section]
\newtheorem{theorem}[proposition]{Theorem}
\newtheorem{lemma}[proposition]{Lemma}
\newtheorem{corollary}[proposition]{Corollary}
\theoremstyle{definition}
\newtheorem{definition}[proposition]{Definition}
\newtheorem{example}[proposition]{Example}
\theoremstyle{remark}
\newtheorem{remark}[proposition]{Remark}
\newcommand{\rhs}{r.h.s.\ }
\newcommand{\wrt}{w.r.t.\ }
\newcommand{\ud}{\mathrm{d}}
\newcommand{\del}{\partial}
\newcommand{\skal}[2]{\langle #1 , #2 \rangle}
\DeclareMathOperator{\sign}{sign}
\DeclareMathOperator{\supp}{supp}
\newcommand{\V}[1]{\mathbf{#1}}
\newcommand{\R}{\mathbb{R}}
\newcommand{\N}{\mathbb{N}}
\newcommand{\betrag}[1]{\lvert #1 \rvert}
\newcommand{\norm}[1]{\lVert #1 \rVert}
\DeclareMathOperator{\WF}{WF}
\DeclareMathOperator{\SP}{SP}
\DeclareMathOperator{\M}{M}
\newcommand{\D}{\mathcal{D}}
\newcommand{\Sym}[3]{S^{#3}(#1, #2)}
\begin{document}

\title{The wave front set of oscillatory integrals with inhomogeneous phase function}
\author{
Jochen Zahn \\
Courant Research Centre ``Higher Order Structures'' \\
University of G\"ottingen \\
Bunsenstra{\ss}e 3--5, D-37073 G\"ottingen, Germany
}

\maketitle

\begin{abstract}
A generalized notion of oscillatory integrals that allows for inhomogeneous phase functions of arbitrary positive order is introduced. The wave front set of the resulting distributions is characterized in a way that generalizes the well-known result for phase functions that are homogeneous of order one.
\end{abstract}

\section{Introduction}
Oscillatory integrals play an important role in the theory of pseudodifferential operators.
They are also a useful tool in Mathematical Physics, in particular in quantum field theory, where they are used to give meaning to formal Fourier integrals in the sense of distributions. For phase functions which are homogeneous of order one, this also leads to a characterization of the wave front set of the resulting distribution, as it is known to be contained in the manifold of stationary phase of the phase function. In these applications, the restriction to phase functions that are homogeneous of order one is often obstructive. In many cases, this restriction can be overcome by shifting a part of the would-be phase function to the symbol, cf. Example~\ref{ex:Delta+} below. However, such a shift is not always possible, for instance if the would-be phase function contains terms of order greater than one. Such phase functions are present in the twisted convolutions that occur in quantum field theory on Moyal space, cf. Examples~\ref{ex:NCQFT} and \ref{ex:NCQFTb} below. Up to now, a rigorous definition of these twisted convolution integrals could be given only in special cases and in such a way that the information on the wave front set is lost. Thus, it is highly desirable to generalize the notion of oscillatory integrals to encompass also phase functions that are not homogeneous of order one. Such generalizations were proposed by several authors. However, to the best of our knowledge, the wave front sets of the resulting distributions were not considered, except for one very special case. We comment on these settings below, cf. Remark~\ref{rem:AsadaFujiwara}.


It is shown here that the restriction to phase functions that are homogeneous of order one can indeed be weakened, without losing information on the wave front set. The generalization introduced here not only allows for inhomogeneous phase functions, but also for phase functions that are symbols of any positive order. However, one has to impose a condition that generalizes the usual nondegeneracy requirement. It is also shown that the wave front sets of the distributions thus obtained are contained in a set that generalizes the notion of the manifold of stationary phase. We conclude with a discussion of some applications.



Throughout, we use the following notation: For an open set $\Omega$, $K \Subset \Omega$ means that $K$ is a compact subset of $\Omega$. $\dot \R^n$ stands for $\R^n \setminus \{ 0 \}$. For a subset $M \subset \Omega \times \dot \R^m$, $\Pi_1 M$ stands for the projection on the first component. $C^k(\Omega)$ denotes the $k$ times continuously differentiable functions supported on $\Omega$ and $C_0^k(\Omega)$ the set of elements of $C^k(\Omega)$ with compact support in $\Omega$. The dual space of $C^k_0(\Omega)$ is denoted by $\D'^{(k)}(\Omega)$ and $\D'(\Omega) = \D'^{(\infty)}(\Omega)$. The pairing of a distribution $F$ and a test function $f$ is denoted by $\skal{F}{f}$. The dot $\cdot$ stands for the scalar product on $\R^n$. $\sphericalangle(x,y)$ denotes the angle between two vectors $x, y \in \R^n$.

\section{Generalized oscillatory integrals}

As usual, cf. \cite{GrigisSjoestrand}, we define a symbol as follows:
\begin{definition}
Let $\Omega \subset \R^n$ be an open set. A function $a: \Omega \times \R^s \to \mathbb{C}$ is called a \emph{symbol of order $m$} if for each $K \Subset \Omega$ and multiindices $\alpha, \beta$, we have
\[
 \norm{a}_{\alpha \beta K} = \sup_{x \in K, \theta} \betrag{ (D_x^\alpha D_\theta^\beta a)(x, \theta) } \left( 1+\betrag{\theta} \right)^{\betrag{\beta}-m} < \infty.
\]
The set of all such functions, equipped with these seminorms will be denoted by $\Sym{\Omega}{s}{m}$. Furthermore, we denote $\Sym{\Omega}{s}{-\infty} = \cap_m \Sym{\Omega}{s}{m}$ and $\Sym{\Omega}{s}{\infty} = \cup_m \Sym{\Omega}{s}{m}$.
\end{definition}

\begin{remark}
For simplicity, we restrict ourselves to these symbols. The generalization to the symbols $S^m_{\rho \delta}$ is straightforward. One then has to restrict to $1-\mu < \rho \leq 1$, $0 \leq \delta < \mu$, where $\mu$ is the order of the phase function introduced below. Also the generalization to asymptotic symbols as discussed in \cite{RS2} is straightforward.
\end{remark}


The following proposition is a straightforward consequence of the definition of $\Sym{\Omega}{s}{m}$:
\begin{proposition}
\label{prop:cont}
The maps $D^\alpha_x: \Sym{\Omega}{s}{m} \to \Sym{\Omega}{s}{m}$, $D^\beta_\theta: \Sym{\Omega}{s}{m} \to \Sym{\Omega}{s}{m-\betrag{\beta}}$ and the multiplication $\Sym{\Omega}{s}{m} \times \Sym{\Omega}{s}{m'} \to \Sym{\Omega}{s}{m+m'}$ are continuous.
\end{proposition}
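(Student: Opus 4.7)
The plan is to reduce each assertion to a direct estimate between seminorms, from which continuity is immediate in the sense of Fréchet (countably seminormed) spaces: a linear map is continuous iff each target seminorm is bounded by a finite sum of source seminorms, and a bilinear map is continuous iff each target seminorm is bounded by a finite sum of products of source seminorms.

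For the derivative $D_x^\alpha$, I would simply observe that for any $K\Subset\Omega$ and multiindices $\alpha',\beta$,
\[
 \norm{D_x^\alpha a}_{\alpha' \beta K}
 = \sup_{x\in K,\theta} \betrag{(D_x^{\alpha+\alpha'} D_\theta^\beta a)(x,\theta)} (1+\betrag{\theta})^{\betrag{\beta}-m}
 = \norm{a}_{(\alpha+\alpha')\beta K},
\]
so every target seminorm is controlled by a single source seminorm. The same strategy works verbatim for $D_\theta^\beta$: a shift by $\beta$ in the $\theta$-multiindex replaces the weight $(1+\betrag{\theta})^{\betrag{\beta'}-(m-\betrag{\beta})}$ by $(1+\betrag{\theta})^{\betrag{\beta+\beta'}-m}$, yielding $\norm{D_\theta^\beta a}_{\alpha\beta' K} = \norm{a}_{\alpha(\beta+\beta') K}$, which matches the target order $m-\betrag{\beta}$.

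For the multiplication, the natural tool is the Leibniz rule
\[
 D_x^\alpha D_\theta^\beta (ab) = \sum_{\alpha'\leq\alpha,\beta'\leq\beta} \binom{\alpha}{\alpha'}\binom{\beta}{\beta'} (D_x^{\alpha'} D_\theta^{\beta'} a)(D_x^{\alpha-\alpha'} D_\theta^{\beta-\beta'} b).
\]
Applying the defining bounds for $a\in\Sym{\Omega}{s}{m}$ and $b\in\Sym{\Omega}{s}{m'}$ to each factor produces a weight $(1+\betrag{\theta})^{m-\betrag{\beta'}}(1+\betrag{\theta})^{m'-\betrag{\beta-\beta'}} = (1+\betrag{\theta})^{m+m'-\betrag{\beta}}$, which is precisely the weight required for $ab\in\Sym{\Omega}{s}{m+m'}$. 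Taking the supremum yields
\[
 \norm{ab}_{\alpha\beta K} \leq \sum_{\alpha'\leq\alpha,\beta'\leq\beta} \binom{\alpha}{\alpha'}\binom{\beta}{\beta'} \norm{a}_{\alpha'\beta' K}\,\norm{b}_{(\alpha-\alpha')(\beta-\beta')K},
\]
which is the required bilinear seminorm estimate.

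There is no genuine obstacle here; the only bookkeeping point worth checking is the exact cancellation of exponents in the product estimate, which is what dictates that the result lies in $\Sym{\Omega}{s}{m+m'}$ rather than in a larger class. Finiteness of the three seminorms for all admissible $\alpha,\beta,K$ moreover shows that the images actually belong to the claimed symbol spaces, so the maps are well defined in addition to being continuous.
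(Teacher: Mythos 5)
Your proof is correct and is exactly the ``straightforward consequence of the definition'' that the paper invokes without writing out: the paper gives no proof of Proposition~\ref{prop:cont}, and your seminorm identities for $D_x^\alpha$, $D_\theta^\beta$ and the Leibniz-rule estimate for the product are the standard argument it has in mind. The exponent bookkeeping in the multiplication step checks out, so nothing further is needed.
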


The following proposition is proven in \cite[Prop.~1.7]{GrigisSjoestrand}:
\begin{proposition}
\label{prop:dense}
If $m'>m$, then $\Sym{\Omega}{s}{-\infty}$ is dense in $\Sym{\Omega}{s}{m}$ for the topology of $\Sym{\Omega}{s}{m'}$.
\end{proposition}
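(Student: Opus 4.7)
The plan is to use a standard mollification in $\theta$. Fix $\chi \in C_0^\infty(\R^s)$ with $\chi \equiv 1$ in a neighborhood of $0$, and for $a \in \Sym{\Omega}{s}{m}$ and $\epsilon > 0$ define
\[
 a_\epsilon(x,\theta) := \chi(\epsilon \theta) \, a(x,\theta).
\]
I will show that $a_\epsilon \in \Sym{\Omega}{s}{-\infty}$ for each $\epsilon>0$, and that $a_\epsilon \to a$ in $\Sym{\Omega}{s}{m'}$ as $\epsilon \to 0^+$.

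The first claim is essentially immediate: for fixed $\epsilon > 0$, $\theta \mapsto \chi(\epsilon\theta)$ has compact support, so for every $K \Subset \Omega$ and all $\alpha,\beta$ the function $D_x^\alpha D_\theta^\beta a_\epsilon$ is bounded on $K \times \R^s$, and multiplying by any power $(1+|\theta|)^{|\beta|-M}$ preserves boundedness since $\theta$ is confined to a compact set. Hence $a_\epsilon$ lies in every $\Sym{\Omega}{s}{M}$.

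For the convergence I would expand
\[
 D_x^\alpha D_\theta^\beta(a - a_\epsilon) = \sum_{\gamma \leq \beta} \binom{\beta}{\gamma} \bigl( D_\theta^{\beta-\gamma}(1-\chi(\epsilon\cdot))\bigr)(\theta) \cdot D_x^\alpha D_\theta^\gamma a(x,\theta)
\]
and separate the $\gamma=\beta$ term from the rest. For $\gamma=\beta$ the cutoff factor $1-\chi(\epsilon\theta)$ vanishes on $\{|\theta| \leq c/\epsilon\}$ for some $c>0$; combined with the symbol bound $|D_x^\alpha D_\theta^\beta a| \leq C(1+|\theta|)^{m-|\beta|}$, multiplication by the $\Sym{\Omega}{s}{m'}$-weight $(1+|\theta|)^{|\beta|-m'}$ yields a contribution bounded by $C(1+c/\epsilon)^{m-m'}$, which tends to $0$ precisely because $m'>m$. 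For $\gamma<\beta$ the chain rule produces a factor $\epsilon^{|\beta-\gamma|}(D^{\beta-\gamma}\chi)(\epsilon\theta)$, supported on an annulus $|\theta| \sim 1/\epsilon$; pairing this with $|D_x^\alpha D_\theta^\gamma a| \leq C(1+|\theta|)^{m-|\gamma|}$ and the weight $(1+|\theta|)^{|\beta|-m'}$ gives, after using the support restriction, a bound of order $\epsilon^{m'-m}$ (or better if an intermediate exponent turns out non-positive), which again vanishes as $\epsilon \to 0^+$.

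The only subtlety is that the estimates must be uniform in $(x,\theta) \in K \times \R^s$, so I would carry out the two cases as genuine supremum bounds rather than pointwise statements; here the gap $m'-m>0$ plays a double role, providing smallness from the large-$|\theta|$ region in the $\gamma=\beta$ piece and balancing the positive power of $\epsilon$ from the chain rule against the shrinking support of $(D^{\beta-\gamma}\chi)(\epsilon\cdot)$ in the $\gamma<\beta$ pieces. This is the only real step of the argument; the rest is bookkeeping.
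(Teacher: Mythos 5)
Your proof is correct, and it is essentially the standard cutoff/mollification argument that the paper invokes by citing Grigis--Sj\"ostrand, Prop.~1.7, rather than proving itself: set $a_\epsilon = \chi(\epsilon\theta)a$, note $a_\epsilon \in \Sym{\Omega}{s}{-\infty}$, and use the gap $m'-m>0$ together with the Leibniz expansion to get convergence in the $\Sym{\Omega}{s}{m'}$ seminorms. Both the $\gamma=\beta$ and $\gamma<\beta$ estimates are handled correctly, so nothing is missing.
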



Now we introduce our new definition of a phase function.
\begin{definition}
\label{def:Phase}
A \emph{phase function} of order $\mu$ on $\Omega \times \R^s$ is a function $\phi: \Omega \times \R^s \to \R$ such that
\begin{enumerate}
 \item $\phi$ is a symbol of order $\mu>0$.
 \item 
\label{enum:Phase}
For each $K \Subset \Omega$ there are positive $C, D$ such that
\[
 \eta(x, \theta) := \betrag{\nabla_x \phi}^2 + \betrag{\theta}^2 \betrag{\nabla_\theta \phi}^2 \geq C \betrag{\theta}^{2\mu} \ \forall \betrag{\theta} \geq D, x \in K.
\]
\end{enumerate}
\end{definition}

\begin{remark}
 \label{rem:Phase}
Condition~\ref{enum:Phase} generalizes the usual nondegeneracy requirement and ensures that $\phi$ oscillates rapidly enough for large $\theta$. In particular it means that $\phi$ is not a symbol of order less than $\mu$. It also means that one can choose $\chi \in C^\infty(\Omega \times \R^s)$ such that $\eta^{-1}(1-\chi)$ is well-defined and a symbol of order $-2\mu$. Here $\chi$ can be chosen such that $\chi(x, \cdot)$ is compact for each $x$.
\end{remark}

\begin{remark}
\label{rem:AsadaFujiwara}
Our definition of a phase function is a generalization of a definition introduced by H\"ormander \cite[Def.~2.3]{Elliptic} in the context of pseudodifferential operators. He considered phase functions of order 1 (in the nomenclature introduced above) and characterized the singular support of the resulting distribution, but not its wave front set. Our characterization of the singular support (cf. Corollary~\ref{cor:M}) coincides with the one given by H\"ormander \cite[Thm.~2.6]{Elliptic}.

Inhomogeneous phase functions were also considered by Asada and Fujiwara \cite{AsadaFujiwara} in the context of pseudodifferential operators on $L^2(\R^n)$. In their setting, $\Omega = \R^{2n}$, $s=n$ and there must be a positive constant $C$ such that
\begin{equation}
\label{eq:AsadaFujiwara}
 \betrag{ \det \begin{pmatrix} \del_x \del_y \phi & \del_x \del_\theta \phi \\ \del_\theta \del_y \phi & \del_\theta \del_\theta \phi \end{pmatrix} } \geq C.
\end{equation}
Furthermore, all the entries of this matrix (and their derivatives) are required to be bounded. Thus, the phase function is asymptotically at least of order 1 and at most of order 2. The admissible amplitudes are at most of order 0. The wave front set of such operators on $L^2(\R^n)$ is not considered by Asada and Fujiwara. The same applies to the works of Boulkhemair \cite{Boulkhemair} and Ruzhansky and Sugimoto \cite{RS06}, who work in a similar context.

Coriasco \cite{Coriasco} considered a special case of H\"ormander's framework, where again $\Omega = \R^{2n}$, $s=n$ and $\phi(x,\theta,y) = \varphi(x,\theta) - y \cdot \theta$ with $\varphi \in \mathbf{SG}^{(1,1)}$, a subset of the symbols of order 1. Furthermore, he imposed growth conditions on $\varphi$ that are more restrictive than Condition~\ref{enum:Phase}. The resulting operators on $\mathcal{S}(\R^n)$ can then be extended to operators on $\mathcal{S}'(\R^n)$. If a further condition analogous to \eqref{eq:AsadaFujiwara} is imposed, then also the wave front set, which is there defined via $\mathbf{SG}$-microregularity, can be characterized (at least implicitly, by the change of the wave front set under the action of the operator).
\end{remark}

\begin{proposition}
\label{prop:Diff}
If $\phi$ is a phase function of order $\mu$ and $a \in \Sym{\Omega}{s}{m}$ and there is a $p \in \N_0$ such that $m + p \mu < -s$, then
\begin{equation}
\label{eq:D_phi}
 D_\phi(a)(x) = \int a(x, \theta) e^{i \phi(x, \theta)} \ud^s \theta \in C^p(\Omega)
\end{equation}
and the map $D_\phi: \Sym{\Omega}{s}{m} \to C^p(\Omega)$ is continuous.
\end{proposition}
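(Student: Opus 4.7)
The plan is to prove the statement by induction on $p$, justifying repeated differentiation under the integral sign by dominated convergence, with the symbol bounds providing the required majorants.

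\textbf{Base case $p=0$.} Here $m<-s$, so for $x$ in any compact $K \Subset \Omega$ the estimate $|a(x,\theta)e^{i\phi(x,\theta)}|\leq \norm{a}_{00K}(1+|\theta|)^m$ gives an integrable majorant independent of $x$. Dominated convergence then yields both the pointwise existence of the integral and the continuity of $x \mapsto D_\phi(a)(x)$. The same estimate yields $\sup_{x\in K}|D_\phi(a)(x)| \leq C_K \norm{a}_{00K}$, establishing continuity of $D_\phi: \Sym{\Omega}{s}{m} \to C^0(\Omega)$.

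\textbf{Inductive step.} Suppose the claim holds for $p$, and assume $m+(p+1)\mu < -s$. The key observation is that for any multiindex $\alpha$ with $|\alpha|\leq p+1$, the product rule and chain rule give
\[
 \del_x^\alpha\bigl[a(x,\theta)e^{i\phi(x,\theta)}\bigr] = \sum_{\text{finite}} c_{\alpha,k}(x,\theta) \, e^{i\phi(x,\theta)},
\]
where each $c_{\alpha,k}$ is a finite product of one $x$-derivative of $a$ with $k \leq |\alpha|$ factors of $x$-derivatives of $\phi$. Since $\phi \in \Sym{\Omega}{s}{\mu}$ and $x$-derivatives do not lower the symbol order (cf. Proposition~\ref{prop:cont}), each such product lies in $\Sym{\Omega}{s}{m+k\mu} \subset \Sym{\Omega}{s}{m+|\alpha|\mu}$, and the associated symbol seminorms of $c_{\alpha,k}$ are bounded by finite products of the seminorms of $a$ and $\phi$. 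Because $m+|\alpha|\mu \leq m+(p+1)\mu < -s$, each term admits the $\theta$-integrable majorant $C_K(1+|\theta|)^{m+|\alpha|\mu}$ on compact sets in $x$. Dominated convergence then justifies pulling $\del_x^\alpha$ inside the integral, proves continuity in $x$ of the resulting function, and shows that $\del_x^\alpha D_\phi(a)$ is bounded on $K$ by a finite linear combination of seminorms of $a$ in $\Sym{\Omega}{s}{m}$. This gives $D_\phi(a)\in C^{p+1}(\Omega)$ together with continuity of $D_\phi:\Sym{\Omega}{s}{m}\to C^{p+1}(\Omega)$.

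\textbf{Main obstacle.} There is no deep obstacle; the one point requiring care is the bookkeeping showing that after $|\alpha|$ differentiations in $x$ every resulting term is still a symbol of order at most $m+|\alpha|\mu$. This is a straightforward induction using only Proposition~\ref{prop:cont} and the fact that $x$-derivatives do not act on $\theta$, so the nondegeneracy Condition~\ref{enum:Phase} of Definition~\ref{def:Phase} is not needed at this stage; it will enter only later, when $D_\phi$ is extended to symbols of higher order via integration by parts.
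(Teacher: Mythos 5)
Your proposal is correct and follows essentially the same route as the paper: differentiation under the integral sign, with each $x$-derivative raising the symbol order by at most $\mu$ (since the derivative brings down a factor $i\nabla_x\phi$ of order $\mu$), so that $m+p\mu<-s$ guarantees integrable majorants throughout. The paper merely states this more tersely ("with the same argument as before one can thus differentiate $p$ times"), whereas you spell out the induction and the Leibniz bookkeeping explicitly.
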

\begin{proof}
For $m < -s$ we have
\[
 \sup_{x \in K} \betrag{D_\phi(a)(x)} \leq \norm{a}_{00K} \int (1+\betrag{\theta})^m \ud^s \theta \leq C \norm{a}_{00K},
\]
so that $\Sym{\Omega}{s}{m} \to C^0(\Omega)$ is continuous. Differentiation gives
\[
 \nabla_x D_\phi(a)(x) = \int \left\{ i \nabla_x \phi(x, \theta) a(x, \theta) + \nabla_x a(x, \theta) \right\} e^{i \phi(x, \theta)}  \ud^s \theta.
\]
The expression in curly brackets is a symbol of order $m+\mu$. With the same argument as before one can thus differentiate $p$ times.
\end{proof}


We formulate the main theorem of this section analogously to \cite[Thm.~1.11]{GrigisSjoestrand}. The proof is a straightforward generalization of the proof given there.
\begin{theorem}
\label{thm:Osc}
Let $\phi(x, \theta)$ be a phase function of order $\mu$ on $\Omega \times \R^s$. Then there is a unique way of defining $D_\phi(a) \in \D'(\Omega)$ for $a \in \Sym{\Omega}{s}{\infty}$ such that $D_\phi(a)$ coincides with \eqref{eq:D_phi} when $a \in \Sym{\Omega}{s}{m}$ for some $m < -s$ and such that, for all $m$, the map $D_\phi: \Sym{\Omega}{s}{m} \to \D'(\Omega)$ is continuous. Moreover, if $p \in \N_0$ and $m-p\mu < -s$, then the map $D_\phi : \Sym{\Omega}{s}{m} \to \D'^{(p)}(\Omega)$ is continuous.
\end{theorem}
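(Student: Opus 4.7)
The natural plan is to construct a first-order differential operator $L$ (in the variables $x$ and $\theta$) satisfying $L e^{i\phi} = e^{i\phi}$, so that the heuristic identity $\int a e^{i\phi} = \int (L^t)^N a \cdot e^{i\phi}$ turns $D_\phi(a)$ into an absolutely convergent integral once $N$ is taken large enough. Using the function $\eta$ and the cutoff $\chi$ from Definition~\ref{def:Phase} and Remark~\ref{rem:Phase}, I would set
\[
L = -i(1-\chi)\,\eta^{-1}\bigl( \nabla_x\phi\cdot\nabla_x + \betrag{\theta}^2\,\nabla_\theta\phi\cdot\nabla_\theta\bigr) + \chi,
\]
and check directly that $L e^{i\phi}=e^{i\phi}$, using $L(e^{i\phi})=(1-\chi)\eta^{-1}\eta\,e^{i\phi}+\chi\,e^{i\phi}$.

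The central technical step is to verify that $L^t$ maps $\Sym{\Omega}{s}{m}$ continuously into $\Sym{\Omega}{s}{m-\mu}$. For this I would first argue, using the lower bound in Condition~\ref{enum:Phase} together with the fact that $\eta$ itself lies in $\Sym{\Omega}{s}{2\mu}$ (since $\nabla_x\phi\in\Sym{\Omega}{s}{\mu}$ and $\betrag{\theta}\nabla_\theta\phi\in\Sym{\Omega}{s}{\mu}$), that $(1-\chi)\eta^{-1}\in \Sym{\Omega}{s}{-2\mu}$; this is the standard reciprocal-of-an-elliptic-symbol argument via Faà di Bruno. Consequently the coefficient of $\partial_{x_j}$ in $L$ is in $\Sym{\Omega}{s}{-\mu}$ and that of $\partial_{\theta_k}$ is in $\Sym{\Omega}{s}{-\mu+1}$; writing out $L^t a = -\sum_j\partial_{x_j}(b_j a)-\sum_k\partial_{\theta_k}(c_k a)+\chi a$ and invoking Proposition~\ref{prop:cont} (and observing that the $\theta$-derivative compensates the extra order in $c_k$) then yields the claim. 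Iterating gives $(L^t)^N:\Sym{\Omega}{s}{m}\to\Sym{\Omega}{s}{m-N\mu}$ continuously.

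With this in hand I would, for $u\in C_0^\infty(\Omega)$, define
\[
 \skal{D_\phi(a)}{u} = \int (L^t)^N(a\,u)\,e^{i\phi(x,\theta)}\,\ud x\,\ud^s\theta,
\]
choosing $N$ so that $m-N\mu<-s$ (so Proposition~\ref{prop:Diff} applied to the ``phase function'' $\phi$ gives an absolutely convergent integral, and the $x$-integration is over the compact support of $u$). Independence of $N$ follows because $(L^t)$ and the integration-by-parts identity $\int (Lf)g = \int f(L^t g)$ are both licit when the symbol orders are low enough, and the $N=0$ case agrees with \eqref{eq:D_phi} for $a\in\Sym{\Omega}{s}{m}$, $m<-s$. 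Continuity of $\Sym{\Omega}{s}{m}\to\D'(\Omega)$ then follows by composing the continuity of $a\mapsto (L^t)^N(au)$ with that of $D_\phi:\Sym{\Omega}{s}{m-N\mu}\to C^0$ from Proposition~\ref{prop:Diff}. Uniqueness is handed to us by Proposition~\ref{prop:dense}: any continuous extension must agree on the dense subspace $\Sym{\Omega}{s}{-\infty}$, where it coincides with the explicit integral.

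For the final assertion about $\D'^{(p)}(\Omega)$, I would observe that the definition above only integrates $u$ against symbols after applying $(L^t)^N$, and that each application of $L^t$ costs one derivative of $u$. Thus if $m-p\mu<-s$, taking $N=p$ produces a formula that makes sense for $u\in C_0^p(\Omega)$ and depends continuously on $u$ in the $C^p$-topology, giving $D_\phi(a)\in\D'^{(p)}(\Omega)$ with continuous dependence on $a\in\Sym{\Omega}{s}{m}$. The main obstacle I anticipate is the bookkeeping in the second step—controlling $\eta^{-1}(1-\chi)$ and all its derivatives as a symbol of order $-2\mu$—since once that is in place the remainder of the argument is a routine adaptation of the homogeneous case, with the symbol order decreasing by $\mu$ (rather than by $1$) at each integration by parts.
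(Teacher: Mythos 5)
Your proposal is correct and follows essentially the same route as the paper: your operator $L$ is exactly the adjoint of the operator $V$ constructed in Lemma~\ref{lemma:V} (same coefficients $i\eta^{-1}(1-\chi)\partial_{x_j}\phi$ and $i\eta^{-1}(1-\chi)\betrag{\theta}^2\partial_{\theta_k}\phi$, same cutoff $\chi$), and the remaining steps --- iterated integration by parts lowering the symbol order by $\mu$ each time, uniqueness via the density of $\Sym{\Omega}{s}{-\infty}$, and counting $p$ derivatives of the test function for the $\D'^{(p)}$ continuity --- all match the paper's argument.
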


To prove this, we need the following Lemma:

\begin{lemma}
\label{lemma:V}
Let $\phi$ be a phase function of order $\mu$ on $\Omega \times \R^s$. Then there exist $a_i \in \Sym{\Omega}{s}{-\mu+1}$, $b_j \in \Sym{\Omega}{s}{-\mu}$ and $c \in \Sym{\Omega}{s}{-\mu}$ such that for the differential operator 
\[
 Vf = a \cdot \nabla_\theta f + b \cdot \nabla_x f + cf
\]
with adjoint
\[
 V^tf = - \nabla_\theta \cdot (a f) - \nabla_x \cdot (b f) + cf
\]
we have
\begin{equation}
 \label{eq:Vt}
V^t e^{i \phi(x, \theta)} = e^{i \phi(x, \theta)}.
\end{equation}
Furthermore, $V$ is a continuous map from $\Sym{\Omega}{s}{m}$ to $\Sym{\Omega}{s}{m-\mu}$.
\end{lemma}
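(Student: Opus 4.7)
The plan is to reduce \eqref{eq:Vt} to an algebraic equation for $a,b,c$ and then solve it using the symbol $\eta^{-1}(1-\chi)\in\Sym{\Omega}{s}{-2\mu}$ supplied by Remark~\ref{rem:Phase}. Applying the product rule,
\[
V^t e^{i\phi} = \bigl( c - \nabla_\theta \cdot a - \nabla_x \cdot b - i\, a \cdot \nabla_\theta \phi - i\, b \cdot \nabla_x \phi \bigr)\, e^{i\phi},
\]
so \eqref{eq:Vt} is equivalent to the pointwise identity
\[
c - \nabla_\theta \cdot a - \nabla_x \cdot b - i\, a \cdot \nabla_\theta \phi - i\, b \cdot \nabla_x \phi = 1.
\]

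Imitating the classical construction for homogeneous phases, I pick $\chi$ as in Remark~\ref{rem:Phase} and set
\[
a := i\, \betrag{\theta}^2\, \eta^{-1}(1-\chi)\, \nabla_\theta \phi, \qquad b := i\, \eta^{-1}(1-\chi)\, \nabla_x \phi,
\]
together with $c := \chi + \nabla_\theta \cdot a + \nabla_x \cdot b$. Since $\phi$ is real-valued, a direct computation gives
\[
-i\, a \cdot \nabla_\theta \phi - i\, b \cdot \nabla_x \phi = \eta^{-1}(1-\chi)\bigl( \betrag{\theta}^2 \betrag{\nabla_\theta \phi}^2 + \betrag{\nabla_x \phi}^2 \bigr) = 1 - \chi,
\]
and the choice of $c$ then closes the identity.

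For the symbol orders, Proposition~\ref{prop:cont} combined with $\nabla_\theta \phi \in \Sym{\Omega}{s}{\mu-1}$, $\nabla_x \phi \in \Sym{\Omega}{s}{\mu}$, and $\betrag{\theta}^2 \in \Sym{\Omega}{s}{2}$ yields $a_i \in \Sym{\Omega}{s}{-\mu+1}$ and $b_j \in \Sym{\Omega}{s}{-\mu}$. Then $\nabla_\theta \cdot a$ and $\nabla_x \cdot b$ each lie in $\Sym{\Omega}{s}{-\mu}$, and $\chi \in \Sym{\Omega}{s}{-\infty}$, so $c \in \Sym{\Omega}{s}{-\mu}$ as well. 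Continuity of $V: \Sym{\Omega}{s}{m} \to \Sym{\Omega}{s}{m-\mu}$ is then immediate from Proposition~\ref{prop:cont}, since each of $a \cdot \nabla_\theta f$, $b \cdot \nabla_x f$, and $cf$ lies in $\Sym{\Omega}{s}{m-\mu}$ continuously in $f$. The only subtle point is the global smoothness of the factor $\eta^{-1}$ at small $\theta$, where $\eta$ may vanish, but this is precisely what the cutoff $\chi$ from Remark~\ref{rem:Phase} is designed to handle, so no separate argument is needed.
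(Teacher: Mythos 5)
Your proposal is correct and follows essentially the same route as the paper: the same choices $a = i\lvert\theta\rvert^2\eta^{-1}(1-\chi)\nabla_\theta\phi$, $b = i\eta^{-1}(1-\chi)\nabla_x\phi$, $c = \chi + \nabla_\theta\cdot a + \nabla_x\cdot b$, the same cancellation $\eta^{-1}(1-\chi)\eta = 1-\chi$, and the same appeal to Proposition~\ref{prop:cont} for the symbol orders and continuity. Your accounting of the orders of $a_i$, $b_j$, $c$ is in fact slightly more explicit than the paper's, which simply asserts they are ``symbols in the required way.''
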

\begin{proof}
We choose $\eta$ as in Definition~\ref{def:Phase} and $\chi$ as in Remark~\ref{rem:Phase}. We  set
\begin{align*}
 a_i & = i \eta^{-1} (1-\chi) \betrag{\theta}^2 \del_{\theta_i} \phi, \\
 b_j & = i \eta^{-1} (1-\chi) \del_{x_j} \phi, \\
 c & = \nabla_\theta \cdot a + \nabla_x \cdot b + \chi.
\end{align*}
Then we have
\begin{align*}
 V^t e^{i \phi} & = \left( - \nabla_\theta \cdot a - i a \cdot \nabla_\theta \phi - \nabla_x \cdot b - i b \cdot \nabla_x \phi + \nabla_\theta \cdot a + \nabla_x \cdot b_j + \chi \right) e^{i\phi} \\
 & = \left( \eta^{-1} (1-\chi) \left( \betrag{\theta}^2 \betrag{\nabla_\theta \phi}^2 + \betrag{\nabla_x \phi}^2 \right) + \chi \right) e^{i\phi} \\
 & = e^{i\phi}.
\end{align*}
It is easy to see that $a_i$, $b_j$ and $c$ are symbols in the required way. The last statement follows from Proposition~\ref{prop:cont}.
\end{proof}

\begin{proof}[Theorem~\ref{thm:Osc}]
The uniqueness is a consequence of Proposition~\ref{prop:dense}. For $a \in \Sym{\Omega}{s}{-\infty}$ and $f \in C^\infty_0(\Omega)$, we have, with $V$ as in Lemma~\ref{lemma:V},
\begin{align*}
 \skal{D_\phi(a)}{f} & = \int e^{i \phi(x, \theta)} a(x, \theta) f(x) \ud^nx \ud^s\theta \\
& = \int (V^t)^p e^{i \phi(x, \theta)} a(x, \theta) f(x) \ud^nx \ud^s\theta \\
& = \int e^{i \phi(x, \theta)} V^p[a(x, \theta) f(x)] \ud^nx \ud^s\theta,
\end{align*}
for any $p \in \N_0$ and thus
\[
 \betrag{D_\phi(a)(f)} \leq \int \betrag{V^p[a(x, \theta) f(x)]} \ud^nx \ud^s\theta.
\]
Now the multiplication $\Sym{\Omega}{s}{m} \times C^\infty_0(\Omega) \to \Sym{\Omega}{s}{m}$ is continuous. Thus, if $a \in \Sym{\Omega}{s}{m}$, then $V^p[af]$ is a symbol of order $m - p \mu$ and in particular we have, for each $K \Subset \Omega$,
\begin{equation}
\label{eq:pBound}
 \sup_{K \times \R^s} \betrag{V^p[a(x, \theta) f(x)]} (1+\betrag{\theta})^{p\mu-m} \leq f_{p,K}(a) \sum_{\betrag{\alpha} \leq p} \sup_{x \in K} \betrag{D^\alpha f},
\end{equation}
where $f_{p,K}$ is a seminorm on $\Sym{\Omega}{s}{m}$. For $a \in \Sym{\Omega}{s}{m}$, we may thus choose $p$ such that $m - p \mu < -s$ and define
\[
 \skal{D^p_\phi(a)}{f} = \int e^{i \phi(x, \theta)} V^p[a(x, \theta) f(x)] \ud^nx \ud^s\theta.
\]
As the sum on the \rhs of \eqref{eq:pBound} is a seminorm on $\D'^{(p)}(\Omega)$, and due to the continuity properties discussed above, the map $D^p_\phi : \Sym{\Omega}{s}{m} \to \D'^{(p)}(\Omega)$ is continuous. For $q> p$, we have $\skal{D^q_\phi(a)}{f} = \skal{D^p_\phi(a)}{f}$ by \eqref{eq:Vt}. Thus, we can unambiguously define $D_\phi(a) = D^p_\phi(a)$.
\end{proof}

\section{The wave front set}
\label{sec:WF}
We may now further characterize the distributions that result from a generalized oscillatory integral.

\begin{definition}
\label{def:SP}
Let $\phi$ be a phase function of order $\mu$ on $\Omega \times \R^s$. We define
\begin{align*}
 \M(\phi) & = \{ (x,\theta) \in \Omega \times \dot \R^s | \nexists C,D> 0 \text{ s.t. } \betrag{\nabla_\theta \phi(x, \lambda \theta)} \geq C \lambda^{\mu-1} \ \forall \lambda > D \}, \\
 \SP(\phi) & = \{ (x, k) \in \Omega \times \dot \R^n | \exists (x,\theta) \in \M(\phi) \text{ s.t. } \nexists \alpha,D>0 \text{ s.t. } \\
 & \quad \sphericalangle(\nabla_x \phi(x,\lambda \theta),k) \geq \alpha \ \forall \lambda > D \}.
\end{align*}
We call $\SP(\phi)$ the \emph{asymptotic manifold of stationary phase}. By definition, it is conic.
\end{definition}

\begin{lemma}
\label{lemma:SP}
$\M(\phi)$ is a closed conic subset of $\Omega \times \dot \R^s$. $\SP(\phi)$ is a closed subset of $\Omega \times \dot \R^n$.
\end{lemma}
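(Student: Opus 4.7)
The plan is to show in both cases that the complement is open.

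For $\M(\phi)$: Conicity is immediate because the defining inequality for the complement is invariant under the substitution $(\theta,\lambda) \mapsto (t\theta,\lambda/t)$ with $t>0$. For closedness, I would fix $(x_0,\theta_0)$ in the complement with witness constants $C_0, D_0 > 0$ giving $|\nabla_\theta\phi(x_0,\lambda\theta_0)| \geq C_0\lambda^{\mu-1}$ for $\lambda > D_0$. Since Proposition \ref{prop:cont} yields $\nabla_x\nabla_\theta\phi \in \Sym{\Omega}{s}{\mu-1}$ and $\nabla_\theta^2\phi \in \Sym{\Omega}{s}{\mu-2}$, the mean value theorem gives
\[
|\nabla_\theta\phi(x,\lambda\theta) - \nabla_\theta\phi(x_0,\lambda\theta_0)| \leq K(|x-x_0| + |\theta-\theta_0|)\lambda^{\mu-1}
\]
on a small compact neighborhood of $(x_0,\theta_0)$ (on which $|\theta|$ stays bounded) for $\lambda$ large. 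Shrinking the neighborhood so that the bracketed factor is less than $C_0/(2K)$ returns $(x,\theta)$ to the complement.

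For $\SP(\phi)$: Given $(x_0,k_0)$ in the complement, I would first note that $S := \M(\phi) \cap (\{x_0\} \times S^{s-1})$ is compact, being a closed subset of $S^{s-1}$ by the first part, and by hypothesis each $\theta \in S$ admits individual constants $\alpha(\theta), D(\theta) > 0$ for which the angle bound holds. The proof then proceeds in three stages. First, I would promote the pointwise bound to a uniform bound $\sphericalangle(\nabla_x\phi(x_0,\lambda\theta),k_0) \geq \alpha$ for $\lambda > D$ on an open neighborhood $N \supset S$ in $S^{s-1}$, combining a finite subcover argument on $S$ with a perturbation estimate for the direction of $\nabla_x\phi(x_0,\lambda\theta)$ under small changes of $\theta$. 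Second, using closedness of $\M(\phi)$ together with compactness, I would find a neighborhood $U$ of $x_0$ such that every $(x,\theta) \in \M(\phi)$ with $x \in U$ and $|\theta|=1$ satisfies $\theta \in N$ (otherwise a convergent subsequence would produce a limit point in $S \setminus N$), and then transfer the uniform angle bound from $x_0$ to $x \in U$ via the analogous perturbation in the $x$-variable. Third, continuity of $\sphericalangle(\cdot,\cdot)$ in the second argument yields a neighborhood $W$ of $k_0$ on which the bound degrades by at most $\alpha/2$. Conicity of $\M(\phi)$ removes the restriction $|\theta|=1$, so $U \times W$ lies in the complement.

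The hard part will be the direction-perturbation estimate underlying the first two stages. The symbol estimates only supply $|\nabla_x\phi(x,\lambda\theta) - \nabla_x\phi(x_0,\lambda\theta')| \leq C\lambda^\mu(|x-x_0| + |\theta-\theta'|)$, so controlling the direction of $\nabla_x\phi$ requires an effective lower bound on $|\nabla_x\phi|$ of comparable order. Such a bound must be extracted from the nondegeneracy inequality $|\nabla_x\phi|^2 + |\theta|^2|\nabla_\theta\phi|^2 \geq C|\theta|^{2\mu}$, exploiting that the angle condition forces $\nabla_x\phi \neq 0$ precisely at the $\lambda$'s being tested and that membership of $\theta$ in $\M(\phi)$ constrains $|\nabla_\theta\phi|$ asymptotically. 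Getting the resulting estimate to be uniform in $\theta \in N$ and in $\lambda$ beyond a single threshold is the technical crux.
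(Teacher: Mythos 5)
Your treatment of $\M(\phi)$ is complete and is essentially the paper's argument: the paper also passes to the complement and writes a first-order Taylor expansion of $F=\nabla_\theta\phi$ around $(x_0,\lambda\theta_0)$, bounding the remainders via $\nabla_x F\in \Sym{\Omega}{s}{\mu-1}$ and $\nabla_\theta F\in\Sym{\Omega}{s}{\mu-2}$ exactly as you do. For $\SP(\phi)$ your architecture (compactness of $\M(\phi)\cap(\{x_0\}\times S^{s-1})$, finite subcover, a neighborhood $U$ of $x_0$ over which no new directions of $\M(\phi)$ appear, then a cone around $k_0$) is the same as, and if anything more explicitly organized than, the paper's; the paper phrases the second stage as ``by suitably restricting $\Gamma_0$ in $\Omega$, no new direction $\theta$ can appear while varying $x$.''

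The gap is the step you yourself flag as the crux: the uniform lower bound $\betrag{\nabla_x\phi(x,\lambda\theta)}\geq C\lambda^{\mu}$ on a conic neighborhood of the fiber of $\M(\phi)$ over $x_0$, without which the angle of $\nabla_x\phi$ cannot be controlled under perturbations of size $O(\lambda^\mu)$. The paper closes it in two moves. First, for $(x_0,\theta)\in\M(\phi)$ it reads the bound directly off Condition~\ref{enum:Phase} of Definition~\ref{def:Phase}: membership in $\M(\phi)$ means precisely that the term $\betrag{\theta}^2\betrag{\nabla_\theta\phi}^2$ fails to supply the required growth $C\betrag{\theta}^{2\mu}$, so the term $\betrag{\nabla_x\phi}^2$ must. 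Second, this bound is propagated from the fiber to a conic neighborhood $\Gamma$ by the \emph{same} Taylor argument you already used for $\M(\phi)$, now applied to $\nabla_x\phi$ (whose deviations scale as $\epsilon\lambda^\mu$ against a zeroth-order term of size $C\lambda^\mu$); this is restated as \eqref{eq:C1} in Lemma~\ref{lemma:SP1}. So the missing estimate is not a new idea but a reuse of your part-one computation one derivative over. Be aware, however, that your parenthetical worry is substantive: membership in $\M(\phi)$ only forces $\betrag{\nabla_\theta\phi(x_0,\lambda\theta)}=o(\lambda^{\mu-1})$ along a subsequence of $\lambda$'s, so the deduction of $\betrag{\nabla_x\phi}\geq C\lambda^{\mu}$ for \emph{all} large $\lambda$ on $\M(\phi)$ requires an additional word (e.g.\ interpolating between the subsequence values using the symbol bounds on $\nabla_\theta\nabla_x\phi$, or restricting attention to those $\lambda$); the paper asserts it without comment, so if you write this out you will be supplying a detail the published proof elides rather than deviating from it.
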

\begin{proof}
From the definition of $\M(\phi)$ it follows that if $(x, \theta) \in \M(\phi)$, then $(x, \lambda \theta) \in \M(\phi)$ for all $\lambda > 0$, so $\M(\phi)$ is conic. We now show that $(\Omega \times \dot \R^s) \setminus \M(\phi)$ is open in $\Omega \times \dot \R^s$. Let $(x_0, \theta_0)$ be such that there are positive $C, D$ such that
\[ 
  \betrag{\nabla_\theta \phi(x_0,\lambda \theta_0)} \geq C \lambda^{\mu-1} \ \forall \lambda > D.
\]
We set $F = \nabla_\theta \phi$. By Taylor's theorem we have
\begin{equation}
\label{eq:Taylor}
 F(x, \lambda \theta) = F(x_0, \lambda \theta_0) + (x-x_0) \cdot G(x, \lambda \theta) + \lambda (\theta - \theta_0) \cdot H(x, \lambda \theta),
\end{equation}
where $G$ and $H$ fulfill the bounds
\begin{align*}
 \betrag{G(x,\lambda \theta)} & \leq \sup_{y \in B_\epsilon(x_0), \tau \in B_{\epsilon}(\theta_0)} \betrag{\nabla_x F(y, \lambda \tau)}, \\
 \betrag{H(x,\lambda \theta)} & \leq \sup_{y \in B_\epsilon(x_0), \tau \in B_{\epsilon}(\theta_0)} \betrag{\nabla_\theta F(y, \lambda \tau)}.
\end{align*}
Here $(x, \theta)$ are chosen such that $\betrag{x-x_0} \leq \epsilon$, $\betrag{\theta - \theta_0} \leq \epsilon$. Furthermore, we restrict to $\epsilon < \betrag{\theta_0}$. Then we may use that $F$ is a symbol of order $\mu-1$ to conclude that there are positive constants $C_\epsilon, D_\epsilon$, which are bounded for $\epsilon \to 0$, for which
\begin{align*} 
 \betrag{G(x, \lambda \theta)} & \leq C_\epsilon \lambda^{\mu-1} \ \forall \lambda > D_\epsilon, \\ 
 \betrag{H(x, \lambda \theta)} & \leq C_\epsilon \lambda^{\mu-2} \ \forall \lambda > D_\epsilon. 
\end{align*}
holds. As the zeroth order term in the Taylor expansion \eqref{eq:Taylor} grows faster than $C \lambda^{\mu-1}$ for a fixed positive constant $C$, we can make $\epsilon$ so small that $\betrag{F(x,\lambda \theta)}$ grows faster than $C' \lambda^{\mu-1}$ for some positive $C'$. Thus, $\M(\phi)$ is closed in $\Omega \times \dot \R^s$.

In order to prove the closedness of $\SP(\phi)$, we first note that if $x_0 \notin \Pi_1 \M(\phi)$, then by the above there is a neighborhood of $x_0$ that does not intersect $\Pi_1 \M(\phi)$. Thus, it suffices to show that for $x_0 \in \M(\phi)$, $(x_0, k_0) \in (\Omega \times \dot \R^n) \setminus \SP(\phi)$ there is a neighborhood that does not intersect $\SP(\phi)$. By Condition~\ref{enum:Phase} of Definition~\ref{def:Phase}, there must be positive constants $C, D$ such that
\[
 \betrag{\nabla_x \phi(x_0, \theta)} \geq C \betrag{\theta}^\mu \ \forall \betrag{\theta}>D, (x_0, \theta) \in \M(\phi).
\]
By the same argument as above, such a bound holds true in a conic neighborhood $\Gamma$ of $\M(\phi) \cap \{x_0\} \times \dot \R^s$. By the definition of $\SP(\phi)$, there are posititve $\alpha, D$ such that
\begin{equation}
\label{eq:AngleBound}
 \sphericalangle(\nabla_x \phi(x_0, \lambda \theta_0),k_0) \geq \alpha \ \forall \lambda > D.
\end{equation}
We now want to show that one can choose a conic neighborhood $\Gamma_0$ of $(x_0, \theta_0)$, contained in $\Gamma$, such that an analogous bound holds, i.e., there are positive $\alpha, D$ so that
\begin{equation}
\label{eq:AngleBound2}
 \sphericalangle(\nabla_x \phi(x, \theta),k_0) \geq \alpha \ \forall \betrag{\theta} > D, (x, \theta) \in \Gamma_0.
\end{equation}
By the above construction, $\betrag{\nabla_x \phi(x, \lambda \theta)}$ grows as $\lambda^\mu$ in $\Gamma$. The deviations that occur by varying $x$ and $\theta$ also scale as $\lambda^\mu$, as $\phi$ is a symbol of order $\mu$. Recalling
\[
 \cos \sphericalangle(\nabla_x \phi(x, \theta),k_0) = \frac{\nabla_x \phi(x, \theta) \cdot k_0}{\betrag{\nabla_x \phi(x, \theta)} \betrag{k_0}}
\]
and again using Taylor's theorem, one shows that by making $\Gamma_0$ small enough, one still retains an inequality of the form \eqref{eq:AngleBound}. By suitably restricting $\Gamma_0$ in $\Omega$, we can ensure  that for $x \in \Pi_1 \Gamma_0 \cap \Pi_1 \M(\phi)$ we have $(x, \theta) \in \Gamma_0$ whenever $(x, \theta) \in \M(\phi)$. Then no new direction $\theta$ for which we would have to check the bound \eqref{eq:AngleBound2} can appear while varying $x$ in $\Pi_1 \Gamma_0$. Given \eqref{eq:AngleBound2}, it is clear that we can also take a conic neighborhood $\Gamma_1$ of $k_0$, by tilting it by angles less than $\alpha$. Choosing $U = \Pi_1 \Gamma_0 \times \Gamma_1$ gives a neighborhood of $(x_0, k_0)$ that does not intersect $\SP(\phi)$.
\end{proof}

\begin{proposition}
\label{prop:M}
If the support of the symbol $a$ does not intersect $\M(\phi)$, then $D_\phi(a)$ is smooth.
\end{proposition}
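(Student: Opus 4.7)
The plan is to establish smoothness via an integration by parts argument using an operator that differentiates only in $\theta$, which is made possible by an improved estimate on $\nabla_\theta \phi$ available on $\supp a$ once the latter is disjoint from $\M(\phi)$.

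First, I would upgrade the pointwise hypothesis to a uniform lower bound. For any $K \Subset \Omega$, the set $\supp a \cap (K \times S^{s-1})$ is compact and, by assumption, disjoint from $\M(\phi)$. The Taylor-expansion argument already carried out in the first half of the proof of Lemma~\ref{lemma:SP} (applied to $F = \nabla_\theta \phi$) shows that each point of $(\Omega \times \dot\R^s) \setminus \M(\phi)$ has an open conic neighborhood on which $\betrag{\nabla_\theta\phi(x,\theta)} \geq C\betrag{\theta}^{\mu-1}$ for $\betrag{\theta}$ sufficiently large. Compactness then yields an open conic subset $\Gamma \subset K \times \dot\R^s$ containing $\supp a \cap (K \times \dot\R^s)$ on which this estimate holds uniformly.

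Second, I would construct an integration by parts operator $W$ using only $\theta$-derivatives, by analogy with Lemma~\ref{lemma:V} but with $\betrag{\nabla_\theta\phi}^{-2}$ in place of $\eta^{-1}$. Pick $\chi$ as in Remark~\ref{rem:Phase} and a further cutoff $\psi \in \Sym{\Omega}{s}{0}$ supported in $\Gamma$ and equal to $1$ on a slightly smaller conic neighborhood of $\supp a \cap (K \times \dot\R^s)$ for $\betrag{\theta}$ large. On the support of $(1-\chi)\psi$ one has $\betrag{\nabla_\theta\phi}^{-2} \leq C\betrag{\theta}^{-2(\mu-1)}$, and every $\theta$-derivative gains a further factor $\betrag{\theta}^{-1}$, so
\[
 \tilde a_i = i(1-\chi)\psi \betrag{\nabla_\theta\phi}^{-2} \del_{\theta_i}\phi \in \Sym{\Omega}{s}{1-\mu}.
\]
Setting $\tilde c = \sum_i \del_{\theta_i}\tilde a_i + 1 - (1-\chi)\psi$, the operator $W = \tilde a_i \del_{\theta_i} + \tilde c$ satisfies $W^t e^{i\phi} = e^{i\phi}$ identically. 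Although the correction $1-(1-\chi)\psi$ is only of order $0$, it vanishes on $\supp a$ for $\betrag{\theta}$ large, so for any $b \in \Sym{\Omega}{s}{m}$ with $\supp b \subset \supp a$ one has $Wb \in \Sym{\Omega}{s}{m-\mu}$ still supported in $\supp a$ (the contribution of the order-$0$ piece is of order $-\infty$, having bounded $\theta$-support on $\supp b$).

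Third, I iterate: $W^p a \in \Sym{\Omega}{s}{m-p\mu}$ with support in $\supp a$, and integration by parts in $\theta$ alone (legitimate because $W$ acts only in $\theta$ and $W^t e^{i\phi} = e^{i\phi}$) gives, for every $f \in C^\infty_0(\Omega)$ and every $p \in \N_0$,
\[
 \skal{D_\phi(a)}{f} = \int f(x)\left(\int e^{i\phi(x,\theta)} W^p a(x,\theta)\, \ud^s\theta\right) \ud^n x.
\]
Choosing $p$ so large that $m - p\mu < -s$ identifies $D_\phi(a)$ with the $x$-function in parentheses. For $C^{\betrag{\alpha}}$-smoothness, I would expand $\del_x^\alpha(e^{i\phi} W^p a)$ by Leibniz using $\del_{x_j}e^{i\phi} = i(\del_{x_j}\phi)e^{i\phi}$, producing a symbol of order $m + \betrag{\alpha}\mu - p\mu$ with support still contained in $\supp a$. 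Taking $p$ large enough relative to $\betrag{\alpha}$ makes the integrand absolutely integrable and permits differentiation under the integral, so $D_\phi(a) \in C^{\betrag{\alpha}}(\Omega)$; since $\alpha$ is arbitrary, $D_\phi(a) \in C^\infty(\Omega)$.

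The main obstacle is the construction of $W$: arranging the cutoff $\psi$ compatibly with the uniform bound in $\Gamma$ and verifying by induction that $(1-\chi)\psi\betrag{\nabla_\theta\phi}^{-2}$ is a genuine symbol of order $-2(\mu-1)$. This is the technical heart of the argument and the precise point where the hypothesis $\supp a \cap \M(\phi) = \emptyset$ enters.
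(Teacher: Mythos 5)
Your argument is essentially the paper's own proof: both construct a $\theta$-only integration-by-parts operator whose coefficients involve $\betrag{\nabla_\theta\phi}^{-2}\,\del_{\theta_i}\phi$ times cutoffs, use the lower bound $\betrag{\nabla_\theta\phi}\geq C\betrag{\theta}^{\mu-1}$ off $\M(\phi)$ to see that these coefficients are symbols of order $-\mu+1$, iterate to lower the symbol order arbitrarily, and conclude smoothness via (your re-derivation of) Proposition~\ref{prop:Diff}. The only differences are in the bookkeeping of the cutoffs (you arrange $W^te^{i\phi}=e^{i\phi}$ exactly, the paper gets $(1-\chi)e^{i\phi}$ and uses $\chi a=0$) and that you spell out the compactness step yielding the uniform lower bound, which the paper leaves implicit.
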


\begin{proof}
We choose a neighborhood $W$ of $\supp a$ whose closure does not intersect $\M(\phi)$. We choose a smooth function $\chi$ that is equal to one in a neighborhood of $\M(\phi)$ and vanishes on $\overline{W}$. We choose another smooth function $\psi$ on $\Omega \times \R^s$ with support in $W$ which is identical to one whenever $\betrag{\nabla_\theta \phi(x, \theta)} = 0$ for $(x, \theta) \in \supp a$. By definition of $\M(\phi)$, the set
\[
 N_x = \{ \theta \in \R^s | (x, \theta) \in \supp a, \betrag{\nabla_\theta \phi(x, \theta)} = 0 \}
\]
is bounded, so we can choose $\psi$ such that $\psi(x, \cdot)$ is compact for each $x$. Then we define
\begin{align*}
 \eta & = \betrag{\nabla_\theta \phi}^2, \\
 a_i & = i \eta^{-1} (1 - \chi) (1-\psi) \del_{\theta_i} \phi, \\
 c & = \nabla_\theta \cdot a + \psi, \\
 V & = a \cdot \nabla_\theta + c.
\end{align*}
By the definition of $\M(\phi)$ and $\chi$, we have $a_i \in \Sym{\Omega}{s}{-\mu+1}$ and $c \in \Sym{\Omega}{s}{-\mu}$. With these definitions, we have
\begin{equation*}
 V^t e^{i \phi} = \left( - \nabla_\theta \cdot a - i a \cdot \nabla_\theta \phi + \nabla_\theta \cdot a + \psi \right) e^{i\phi} = (1-\chi) e^{i\phi}.
\end{equation*}
Here we used that $\chi$ and $\psi$ have nonoverlapping supports. As $\chi a = 0$ and $V$ differentiates only \wrt $\theta$, we thus have $D_\phi(a) = D_\phi(V^p a)$ for arbitrary integer $p$. As $V$ maps symbols of order $m$ to symbols of order $m - \mu$, $D_\phi(a)$ is smooth by Proposition~\ref{prop:Diff}.
\end{proof}

\begin{corollary}
\label{cor:M}
The singular support of $D_\phi(a)$ is contained in $\Pi_1 \M(\phi)$.
\end{corollary}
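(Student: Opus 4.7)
The plan is to deduce the corollary from Proposition~\ref{prop:M} by a standard localization argument. Fix $x_0 \in \Omega \setminus \Pi_1 \M(\phi)$; the goal is to exhibit an open neighborhood of $x_0$ on which $D_\phi(a)$ is smooth. The first preparatory observation is that $\Pi_1 \M(\phi)$ is actually closed in $\Omega$: by the conicity asserted in Lemma~\ref{lemma:SP} one has $\Pi_1 \M(\phi) = \Pi_1 \left( \M(\phi) \cap (\Omega \times S^{s-1}) \right)$, and the projection $\Omega \times S^{s-1} \to \Omega$ is proper because the sphere is compact, hence it sends the closed set $\M(\phi) \cap (\Omega \times S^{s-1})$ to a closed subset of $\Omega$.

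With this in hand I would choose a cutoff $\chi \in C^\infty_0(\Omega)$ that equals $1$ on some open neighborhood $U$ of $x_0$ and whose support is disjoint from $\Pi_1 \M(\phi)$. The localized symbol $\chi(x) a(x, \theta)$ lies in $\Sym{\Omega}{s}{m}$ and its support is contained in $\supp \chi \times \R^s$, which therefore does not intersect $\M(\phi)$. Proposition~\ref{prop:M} then gives $D_\phi(\chi a) \in C^\infty(\Omega)$.

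The remaining step, and the only one requiring a moment of care, is the identification $D_\phi(\chi a) = \chi \cdot D_\phi(a)$. For $a \in \Sym{\Omega}{s}{-\infty}$ this is immediate from the absolutely convergent integral formula \eqref{eq:D_phi}, since $\chi$ depends only on $x$. To extend to arbitrary $a \in \Sym{\Omega}{s}{m}$ I would note that both $a \mapsto D_\phi(\chi a)$ and $a \mapsto \chi \cdot D_\phi(a)$ are continuous maps $\Sym{\Omega}{s}{m'} \to \D'(\Omega)$ for any $m' > m$: for the former, Theorem~\ref{thm:Osc} is composed with the continuity of multiplication from Proposition~\ref{prop:cont}, viewing $\chi$ as an element of $\Sym{\Omega}{s}{0}$; for the latter, Theorem~\ref{thm:Osc} is composed with multiplication of a distribution by a smooth compactly supported function. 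By Proposition~\ref{prop:dense}, $\Sym{\Omega}{s}{-\infty}$ is dense in $\Sym{\Omega}{s}{m}$ in the $\Sym{\Omega}{s}{m'}$ topology, so the identity propagates to all symbols. Hence $\chi \cdot D_\phi(a)$ is smooth and $D_\phi(a)$ is smooth on $U$. Since $x_0 \notin \Pi_1 \M(\phi)$ was arbitrary, the singular support is contained in $\Pi_1 \M(\phi)$. I do not anticipate any serious obstacle; the only piece that is not pure functional-analytic bookkeeping is the closedness of $\Pi_1 \M(\phi)$, which is what makes the separating cutoff $\chi$ available.
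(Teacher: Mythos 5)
Your argument is correct and is exactly the standard localization argument that the paper leaves implicit (the corollary is stated without proof as an immediate consequence of Proposition~\ref{prop:M}). The two details you flag --- the closedness of $\Pi_1 \M(\phi)$ (which the paper also notes, in a different way, inside the proof of Lemma~\ref{lemma:SP}) and the identity $D_\phi(\chi a) = \chi\, D_\phi(a)$ via density --- are handled properly.
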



\begin{theorem}
\label{thm:SP}
The wave front set of $D_\phi(a)$ is contained in $\SP(\phi)$.
\end{theorem}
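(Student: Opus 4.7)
The plan is to show that every $(x_0,k_0)\notin\SP(\phi)$ is a direction of regularity of $D_\phi(a)$; that is, I will produce a cutoff $\rho\in C_0^\infty(\Omega)$ with $\rho(x_0)\neq 0$ and an open conic neighborhood $\Gamma_1$ of $k_0$ such that the Fourier transform $\widehat{\rho\,D_\phi(a)}(k)$ is rapidly decreasing for $k\in\Gamma_1$. If $x_0\notin\Pi_1\M(\phi)$ this follows directly from Corollary~\ref{cor:M}, so from now on I assume $x_0\in\Pi_1\M(\phi)$.

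Following the Taylor expansion arguments in the proof of Lemma~\ref{lemma:SP}, I choose a neighborhood $U$ of $x_0$, a conic neighborhood $\Gamma_1$ of $k_0$, and an open conic neighborhood $\Gamma_\M\subset\Omega\times\dot\R^s$ of $\M(\phi)\cap(U\times\dot\R^s)$ such that, for $\betrag{\theta}$ sufficiently large, both $\betrag{\nabla_x\phi(x,\theta)}\geq C\betrag{\theta}^\mu$ and $\sphericalangle(\nabla_x\phi(x,\theta),k)\geq\alpha/2$ hold uniformly on $\Gamma_\M$ with $k\in\Gamma_1$. I then fix a smooth cutoff $\chi(x,\theta)$ that vanishes for $\betrag{\theta}\leq 1$, equals $1$ in a smaller conic neighborhood of $\M(\phi)\cap(U\times\dot\R^s)$ at $\betrag{\theta}\geq 2$, and is supported inside $\Gamma_\M$ at $\betrag{\theta}\geq 2$, and write $a=\chi a+(1-\chi)a=:a_1+a_2$. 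Pick also $\rho\in C_0^\infty(U)$ with $\rho(x_0)\neq 0$.

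For $a_2$ a further decomposition $a_2=\zeta a_2+(1-\zeta)a_2$ with $\zeta\in C_0^\infty(\R^s)$ equal to $1$ near the origin reduces matters to Proposition~\ref{prop:Diff} (applied to $\zeta a_2\in\Sym{\Omega}{s}{-\infty}$) and to Proposition~\ref{prop:M} (applied to $(1-\zeta)a_2$, whose support avoids $\M(\phi)$), yielding smoothness of $D_\phi(a_2)$ on $U$. Consequently $\rho\,D_\phi(a_2)\in C_0^\infty(\Omega)$ and its Fourier transform is Schwartz, giving rapid decay in every direction, in particular in $\Gamma_1$.

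For $a_1$ the key geometric estimate is that on $\supp(\rho\,\chi)\cap\{\betrag{\theta}\geq D\}$ and for $k\in\Gamma_1$,
\[
\betrag{\nabla_x\phi(x,\theta)-k}^2 \geq (1-\cos(\alpha/2))\bigl(\betrag{\nabla_x\phi}^2+\betrag{k}^2\bigr) \geq c\,(\betrag{\theta}^\mu+\betrag{k})^2,
\]
which enables iterated integration by parts in $x$ via the operator $L^t g = i\nabla_x\cdot\bigl[g\,(\nabla_x\phi-k)/\betrag{\nabla_x\phi-k}^2\bigr]$ satisfying $L^t e^{i(\phi-k\cdot x)}=e^{i(\phi-k\cdot x)}$. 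To make the manipulations rigorous, I replace $a_1$ by $\omega_\epsilon a_1\in\Sym{\Omega}{s}{-\infty}$ with $\omega_\epsilon(\theta)=\omega(\epsilon\theta)$, $\omega\in C_0^\infty(\R^s)$, $\omega(0)=1$, and pass to the limit using the continuity from Theorem~\ref{thm:Osc}. The symbol estimates on $\phi$ combined with the lower bound above should yield, after $N$ iterations, an integrand bounded by $C_N(1+\betrag{\theta})^m(\betrag{\theta}^\mu+\betrag{k})^{-N}$ uniformly in $\epsilon$; the substitution $\betrag{\theta}=\betrag{k}^{1/\mu}u$ in the resulting $\theta$-integral then gives decay of order $\betrag{k}^{(s+m)/\mu-N}$, arbitrarily fast for $N$ large. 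The main technical obstacle will be controlling the iterated action of $L^t$: one must verify that $(\nabla_x\phi-k)/\betrag{\nabla_x\phi-k}^2$ together with all its $x$-derivatives are bounded by $(\betrag{\theta}^\mu+\betrag{k})^{-1}$ (using $\phi\in\Sym{\Omega}{s}{\mu}$) and that the intermediate region $\betrag{\theta}\sim 1$, where $\chi$ is not locally constant, contributes only a smooth remainder.
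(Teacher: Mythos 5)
Your proposal follows essentially the same route as the paper's proof: reduce to $x_0\in\Pi_1\M(\phi)$ via Corollary~\ref{cor:M}, localize the symbol conically near $\M(\phi)$ (disposing of the rest by Propositions~\ref{prop:Diff} and~\ref{prop:M}), and then integrate by parts in $x$ with the operator built from $(\nabla_x\phi-k)/\betrag{\nabla_x\phi-k}^2$, using exactly the lower bound $\betrag{\nabla_x\phi-k}\gtrsim\betrag{\theta}^\mu+\betrag{k}$ that the paper isolates as Lemma~\ref{lemma:SP1}; your law-of-cosines derivation of that bound is a slightly cleaner version of the paper's geometric argument, and your regularization-plus-continuity step plays the role of the paper's appeal to Proposition~\ref{prop:dense}. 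The technical points you flag at the end (symbol estimates for the iterated transpose operator, the intermediate $\betrag{\theta}$ region) are handled in the paper by the same observations you sketch, so the argument is sound.
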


\begin{lemma}
\label{lemma:SP1}
 Let $(x_0, \theta_0) \in \M(\phi)$, $(x_0, k_0) \in (\Omega \times \dot \R^n) \setminus \SP(\phi)$. Then there is a conic neighborhood $\Gamma$ of $(x_0, \theta_0)$, a conic neighborhood $V$ of $k_0$ and positive constants $C, D, \alpha$ such that
\begin{align}
\label{eq:C1}
 \betrag{\nabla_x \phi(x, \theta)} & \geq C \betrag{\theta}^\mu \ \forall \betrag{\theta} > D, (x, \theta) \in \Gamma, \\
\label{eq:C2}
 \sphericalangle(\nabla_x \phi(x, \theta), k) & \geq \alpha \ \forall \betrag{\theta} > D, (x, \theta) \in \Gamma, k \in V.
\end{align}
Furthermore, there is a positive $C'$ such that
\[
 \betrag{\nabla_x \phi(x, \theta) - k} \geq C' (\betrag{\theta}^\mu + \betrag{k}) \ \forall \betrag{\theta} > D, (x, \theta) \in \Gamma, k \in V.
\]
\end{lemma}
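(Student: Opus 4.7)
My plan is to observe that \eqref{eq:C1} and \eqref{eq:C2} can be obtained by recycling the Taylor expansion arguments already carried out in the proof of Lemma~\ref{lemma:SP}, and then to derive the third bound from them by a short Euclidean computation.

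For \eqref{eq:C1}, I would invoke the intermediate fact established in the proof of Lemma~\ref{lemma:SP}: the hypothesis $(x_0, \theta_0) \in \M(\phi)$ together with Condition~\ref{enum:Phase} of Definition~\ref{def:Phase} yields positive $C, D$ such that $\betrag{\nabla_x \phi(x_0, \theta)} \geq C \betrag{\theta}^\mu$ for all $(x_0,\theta) \in \M(\phi)$ with $\betrag{\theta} > D$. A Taylor expansion in $x$ and $\theta$ (using that $\nabla_x \phi$ is a symbol of order $\mu$, so the deviations scale as $\lambda^\mu$) then extends this bound to a conic neighborhood $\Gamma$ of $(x_0, \theta_0)$, in exactly the same way that $\Gamma_0$ was constructed there. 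For \eqref{eq:C2}, the hypothesis $(x_0, k_0) \notin \SP(\phi)$ provides the bound \eqref{eq:AngleBound} at $\theta = \theta_0$, which the proof of Lemma~\ref{lemma:SP} extends (possibly shrinking $\Gamma$) to the form \eqref{eq:AngleBound2}; it then suffices to take $V$ to be a conic neighborhood of $k_0$ of angular radius less than half the resulting angle and to apply the spherical triangle inequality to transfer the bound from $k_0$ to every $k \in V$, at the cost of a further halving of $\alpha$.

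The third bound is pure Euclidean geometry. Starting from
\[
 \betrag{\nabla_x \phi(x,\theta) - k}^2 = \betrag{\nabla_x \phi(x,\theta)}^2 + \betrag{k}^2 - 2 \betrag{\nabla_x \phi(x,\theta)} \betrag{k} \cos \sphericalangle(\nabla_x \phi(x,\theta), k)
\]
and $\cos \sphericalangle \leq \cos \alpha < 1$, one obtains (by rewriting the right-hand side as $(1-\cos\alpha)(\betrag{\nabla_x \phi}^2 + \betrag{k}^2) + \cos\alpha(\betrag{\nabla_x \phi} - \betrag{k})^2$ when $\cos\alpha \geq 0$, and dropping the cross term when $\cos\alpha < 0$) a lower bound of the form $\betrag{\nabla_x \phi(x,\theta) - k} \geq c(\alpha)(\betrag{\nabla_x \phi(x,\theta)} + \betrag{k})$ with $c(\alpha) > 0$; combining this with \eqref{eq:C1} produces the desired bound. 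The main obstacle is really just bookkeeping: one must ensure that the same $\Gamma$, $V$, and constants work simultaneously for all three inequalities, shrinking them as needed along the way. Since the substantive technical work — the Taylor-expansion extensions — is already contained in Lemma~\ref{lemma:SP}, no genuinely new difficulty arises.
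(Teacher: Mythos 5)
Your proposal is correct and follows essentially the same route as the paper: conditions \eqref{eq:C1} and \eqref{eq:C2} are obtained by the same recycling of the Taylor-expansion arguments from the proof of Lemma~\ref{lemma:SP}, and the final estimate is deduced from the angle bound together with \eqref{eq:C1}. The only (cosmetic) difference is in the last step, where the paper reads off $\betrag{q(\theta)-k} \geq \tfrac{1}{2}\sin\alpha\,(\betrag{q(\theta)}+\betrag{k})$ from a geometric figure, whereas you derive the equivalent constant $C'(\alpha)$ by an explicit law-of-cosines computation.
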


\begin{proof}
Condition \eqref{eq:C1} is fulfilled for $(x_0, \theta_0)$ by Condition~\ref{enum:Phase} of Definition~\ref{def:Phase}. That it is also fulfilled in a neighborhood of $(x_0, \theta_0)$ can be shown analogously to the proof of the closedness of $\M(\phi)$ in Lemma~\ref{lemma:SP}. Condition \eqref{eq:C2} is fulfilled for $(x_0, \theta_0, k_0)$. That it is also fulfilled in a neighborhood of $(x_0, \theta_0, k_0)$ can again be shown as in Lemma~\ref{lemma:SP}. In order to prove the last statement, we note that by \eqref{eq:C2} we have
\[
 \betrag{\nabla_x \phi(x, \theta) - k} \geq \betrag{q(\theta) - k} \ \forall \betrag{\theta} > D,
\]
where $q(\theta) \in \R^n$ has length $\betrag{\nabla_x \phi(x, \theta)}$ and lies on the cone with angle $\alpha$ around $k$ (see the figure, where $\nabla_x \phi(x, \theta)$ is denoted by $p$).
\begin{center}
 \includegraphics[scale=0.4]{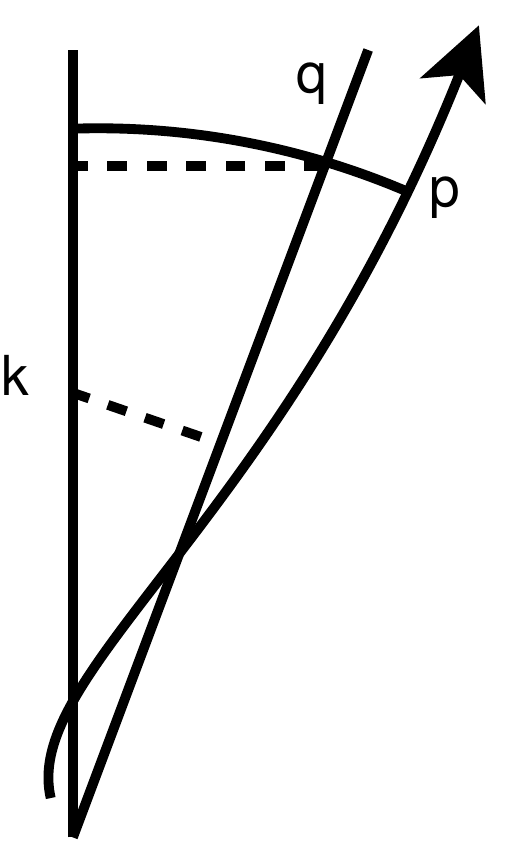}
\end{center}
For the distance of $k$ and $q(\theta)$ we have the bound (see the dashed lines in the figure)
\[
 \betrag{q(\theta)-k} \geq \tfrac{1}{2} \sin \alpha \left( \betrag{q(\theta)} + \betrag{k} \right).
\]
Using \eqref{eq:C1}, we then obtain the above statement.
\end{proof}

\begin{proof}[Theorem~\ref{thm:SP}]
By Corollary~\ref{cor:M}, it suffices to consider points $x_0 \in \Pi_1 \M(\phi)$.
Let $(x_0, k_0) \in (\Omega \times \dot \R^n) \setminus \SP(\phi)$. Due to Proposition~\ref{prop:M}, we may assume that $a$ is supported in an arbitrarily small closed conic neighborhood $\Gamma$ of $\M(\phi)$. We thus need to show that there is a $\psi \in C^\infty_0(\Omega)$, identically one near $x_0$ and a conic neighborhood $V$ of $k_0$ such that for each $N$ there is a seminorm $C_{m, N}$ on $\Sym{\Omega}{s}{m}$ such that
\begin{equation}
\label{eq:Goal}
 \betrag{\widehat{\psi D_\phi(a)}(k)} \leq C_{m,N}(a) (1+\betrag{k})^{-N} \ \forall N, k \in V, 
\end{equation}
for all $a$ supported in $\Gamma$. As in the proof of Theorem~\ref{thm:Osc}, it suffices to construct such a bound for $a \in \Sym{\Omega}{s}{-\infty}$ and then make use of Proposition~\ref{prop:dense}.

Let $\Gamma, V, D$ be as in Lemma~\ref{lemma:SP1}.
Choose $\zeta \in C^\infty_0(\Omega)$ that is identically one near $x_0$ and whose support is contained in $\Pi_1 \Gamma$. Choose a $\chi \in C^\infty_0(\R^s)$ that is identical to one on $\betrag{\theta} \leq D$. Now we set
\begin{align*}
 \eta & = \betrag{\nabla_x \phi - k}^2, \\
 b_j & = i \eta^{-1} \zeta (1-\chi) (\del_{x_j} \phi - k_{j}), \\
 c & = \nabla_x \cdot b, \\
 V_k & = b \cdot \nabla_x + c.
\end{align*}
Then for $k \in V$ we have
\begin{align*}
 V^t_k e^{i(\phi(x, \theta)-ik\cdot x)} & = \left( - \nabla_x \cdot b - i b \cdot (\nabla_x \phi - k) + \nabla_x \cdot b \right)  e^{i(\phi(x, \theta)-ik\cdot x)} \\
& = \zeta (1-\chi) e^{i(\phi(x, \theta)-ik\cdot x)}.
\end{align*}
Now we choose $\psi \in C_0^\infty(\Omega)$, identical to one near $x_0$ and with support in $\zeta^{-1}(1)$. We also choose $\xi \in C^\infty_0(\R^s)$ which is identically one on $\supp \chi$. We consider $a = (1-\xi) a + \xi a$. Then by Proposition~\ref{prop:Diff}, the second term yields a smooth function, 
so that the above bound is fulfilled. It remains to consider
\begin{align*}
 \betrag{\widehat{\psi D_\phi((1-\xi)a)}(k)} & = \betrag{\int e^{i\phi(x, \theta) - ik\cdot x} \psi(x) (1-\xi(\theta)) a(x, \theta) \ud^n x \ud^s \theta} \\
 & = \betrag{\int e^{i\phi(x, \theta) - ik\cdot x} V_k^p[\psi(x) (1-\xi(\theta)) a(x, \theta)] \ud^n x \ud^s \theta} \\
 & \leq  \int \betrag{ V_k^p[\psi(x) (1-\xi(\theta)) a(x, \theta)]} \ud^n x \ud^s \theta.
\end{align*}
Here we used that $\zeta (1-\chi)$ is identically one on the support of $\psi (1-\xi)$. By Lemma~\ref{lemma:SP1}, one now has
\[
 \betrag{ V_k^p[\psi(x) (1-\xi(\theta)) a(x, \theta)]} \leq C_{m,p}(a) (1+\betrag{\theta})^m (\betrag{\theta}^\mu + \betrag{k})^{-p},
\]
where $C_{m,p}$ is a seminorm on $\Sym{\Omega}{s}{m}$. By using
\begin{align*}
 (\betrag{\theta}^\mu + \betrag{k})^{-1} & \leq (D^\mu + \betrag{k})^{-1}, \\
 (\betrag{\theta}^\mu + \betrag{k})^{-1} & \leq \betrag{\theta}^{-\mu} \leq (\tfrac{D}{2} + \tfrac{D}{2} \betrag{\theta})^{-\mu},
\end{align*}
one can make the integral convergent and assure \eqref{eq:Goal} by choosing $p$ large enough.
\end{proof}

\section{Applications}

\begin{example}
\label{ex:Delta+}
We consider the two-point function $\Delta_+$ of a free massive scalar relativistic field, where one has $\Omega = \R^4, s = 3$ and
\begin{equation}
\label{eq:phiKG}
 \phi(x, \theta) = - x_0 \omega(\theta) + \V{x} \cdot \theta,
\end{equation}
with 
\begin{equation}
\label{eq:omega}
 \omega(\theta) = \sqrt{\betrag{\theta}^2 + m^2}.
\end{equation}
Here, we use the notation $x = (x_0, \V{x})$, with $\V{x} \in \R^3$. 
Note that $\phi$ is not homogeneous. In \cite{RS2} this problem is circumvented by using $-x_0 \betrag{\theta} + \V{x} \cdot \theta$ as phase function and multiplying the symbol with the function $e^{ix_0 (\betrag{\theta} - \omega(\theta))}$. It is then no longer a symbol (as it is not smooth in $\theta = 0$), so one has to allow for so-called asymptotic symbols. Furthermore, one has to show that the multiplication with such a term does not spoil the fall-off properties, in particular that differentiation \wrt $\theta$ lowers the order.

In the present approach, this is not necessary. 
$\phi$ is a phase function in the sense of Definition~\ref{def:Phase} and therefore, by Theorem~\ref{thm:Osc}, it defines an oscillatory integral for every symbol $a$.
In order to find the wave front set, we compute
\[
 \nabla_\theta \phi(x, \theta) = - \frac{\theta}{\omega(\theta)} x_0 + \V{x}.
\]
It is easy to see that its modulus is bounded from below by a positive constant unless $x=0$ or $\betrag{x_0} = \betrag{x}$ and $\V{x} = \sign(x_0) \theta$. Thus, we have
\[
 \M(\phi) = \{(x, \theta) | x = 0 \text{ or } \betrag{x_0} = \betrag{x} \neq 0, \theta = \lambda \V{x} / x_0, \lambda > 0 \}.
\]
Furthermore,
\[
 \nabla_x \phi(x, \theta) = (-\omega(\theta), \theta).
\]
For large $\theta$, this behaves as
\[
 \nabla_x \phi(x, \theta) \sim (-\betrag{\theta}, \theta) + R(\theta),
\]
where the remainder term $R$ scales as $\betrag{\theta}^{-1}$. Thus, only in the directions $(-\betrag{\theta}, \theta)$ the bound on the angle of $k$ and $\nabla_x \phi$ can not be fulfilled. Hence, we obtain the well-known result\footnote{In the physical literature, the Fourier transform is computed by integration with $e^{i(k_0 x_0 - \V{k} \cdot \V{x})}$. In that convention, the sign of the zeroth component in the cotangent bundle has to be reversed.}
\[
 \WF(\Delta_+) \subset \SP(\phi) = \{ (0, (-\betrag{\V{k}}, \V{k})) \} \cup \{ ((\pm \betrag{\V{x}}, \V{x}), (-\lambda \betrag{\V{x}}, \pm \lambda \V{x})) | \lambda > 0 \}.
\]

A variant of this example is obtained by considering phase functions of the form
\begin{equation}
\label{eq:phiDistorted}
 \phi(x, \theta) = - x_0 \sqrt{\betrag{\theta}^2 + f(\theta)} + \V{x} \cdot \theta,
\end{equation}
with $f$ a positive function that is a symbol of order $\nu < 2$. Such expressions occur for example in quantum field theory on the Moyal plane with hyperbolic signature and signify a distortion of the dispersion relations, cf. \cite{Quasiplanar,NCDispRel}. The above trick to put $e^{-ix_0(\sqrt{\betrag{\theta}^2 + f(\theta)} - \betrag{\theta})}$ into the symbol still works, but then the symbol will be of type $\rho' = \min(\rho, 2-\nu)$, where $\rho$ is original type of the symbol. It is straightforward to check that \eqref{eq:phiDistorted} still defines a phase function of order 1 in the sense defined here, and that its stationary phase is as above. 

If the function $f$ in \eqref{eq:phiDistorted} is a symbol of order $\nu > 2$,\footnote{The phase function $\phi$ of \eqref{eq:phiKG} corresponds to solutions of the hyperbolic wave equation $(\del_t^2 - \Delta_x + m^2) \psi = 0$. The modification suggested here means that the underlying PDE is no longer hyperbolic.}  then the shift of $e^{-ix_0(\sqrt{\betrag{\theta}^2 + f(\theta)} - \betrag{\theta})}$ to the symbol is not possible, as this would no longer give a symbol of type $\rho > 0$. Thus, a treatment of \eqref{eq:phiDistorted} in the context of phase functions that are homogeneous of order 1 is not possible. However, one can still interpret \eqref{eq:phiDistorted} as a phase function of order $\nu/2$ and easily computes
\[
 \SP(\phi) = \{ (x, k) | x_0 = 0, \V{k} = 0, k_0 \neq 0 \}.
\]
\end{example}


\begin{example}
\label{ex:NCQFT}
In quantum field theory on the Moyal plane of even dimension $d$ with Euclidean signature\footnote{For an overview on the subject, we refer to \cite{ReviewWulkenhaar}.}, one frequently finds phase funtions of the form
\[
 \R^d \times \R^{2d} \ni (x, \theta_1, \theta_2) \mapsto \phi(x,\theta) = x \cdot (\theta_1 + \theta_2) + \theta_1^t \sigma \theta_2.
\]
Here $\sigma$ is some real antisymmetric $d \times d$ matrix of maximal rank $d$. The above is clearly a symbol of order 2, and we have
\begin{align*}
 \nabla_{\theta_1} \phi(x, \theta) & = x + \sigma \theta_2, \\
 \nabla_{\theta_2} \phi(x, \theta) & = x - \sigma \theta_1.
\end{align*}
As $\sigma$ has rank $d$, Condition~\ref{enum:Phase} of Definition~\ref{def:Phase} is fulfilled. From the above it follows that $\M(\phi) = \emptyset$ and thus also $\SP(\phi) = \emptyset$, so that the resulting distributions are smooth.

We note that up to now such integrals could only be treated in the so-called adiabatic limit \cite{Doscher}. But then one loses the information about the singular behaviour in position space, contrary to the present case, where the wave front set is known completely. 
\end{example}

\begin{example}
\label{ex:NCQFTb}
In quantum field theory on the Moyal plane with hyperbolic signature, one frequently finds phase functions of the form
\[
 \R^d \times \R^{2(d-1)} \ni (x, \theta_1, \theta_2) \mapsto \phi(x,\theta) = x \cdot (\tilde \theta_1 + \tilde \theta_2) + \tilde \theta_1^t \sigma \tilde \theta_2,
\]
where $\tilde \theta = (\omega(\theta), \theta)$ with $\omega$ as in \eqref{eq:omega} and $\sigma$ as in Example~\ref{ex:NCQFT}. The above is a symbol of order 2, but it is not a phase function as defined here, as can most easily be seen in the case $d=2$. Then with $\sigma = \epsilon$ one obtains
\begin{align*}
 \del_{\theta_1} \phi(x, \theta) & = x_0 \frac{\theta_1}{\omega(\theta_1)} + x_1 + \theta_2 \frac{\theta_1}{\omega(\theta_1)} - \omega(\theta_2), \\
 \del_{\theta_2} \phi(x, \theta) & = x_0 \frac{\theta_2}{\omega(\theta_2)} + x_1 - \theta_1 \frac{\theta_2}{\omega(\theta_2)} + \omega(\theta_1).
\end{align*}
If the signs of $\theta_1$ and $\theta_2$ coincide, then the above derivatives tend to a constant as a function of $\theta$, so that Condition~\ref{enum:Phase} of Definition~\ref{def:Phase} is not fulfilled. The rigourous treatment of such integrals is an open problem, which we plan to address in future work\footnote{Joint work with Dorothea Bahns.}.
\end{example}


{\bf \noindent Acknowledgements}

\noindent It is a pleasure to thank Dorothea Bahns for helpful discussions and her detailed comments on the manuscript. I would also like to thank Ingo Witt for valuable comments. This work was supported by the German Research Foundation (Deutsche Forschungsgemeinschaft (DFG)) through the Institutional Strategy of the University of G\"ottingen.



\end{document}